\documentclass[a4paper,12pt]{amsart}

\usepackage[all]{xy}

%
\usepackage{t1enc}
%


\def\cD{\mathcal{D}}
\def\cR{\mathcal{R}}
\def\cV{\mathcal{V}}

\def\cN{\mathcal{N}}
\def\cU{\mathcal{U}}
\def\cL{\mathcal{L}}

\def\R{\mathbb{R}}

\def\F{\mathbb{F}}

\def\be{\beta}
\def\al{\alpha}
\def\la{\lambda}

\def\bx{\mbox{\boldmath $x$}}
\def\bD{\mbox{\boldmath $\cD$}}
\def\bg{\mathbf{g}}

\newcommand{\id}{\operatorname{id}}

\newcommand{\End}{\operatorname{End}}
\newcommand{\Min}{\operatorname{Min}}
\newcommand{\Max}{\operatorname{Max}}

\newtheorem{theorem}{Theorem}[section]
\newtheorem{lemma}[theorem]{Lemma}
\newtheorem{proposition}[theorem]{Proposition}

\theoremstyle{remark}
\newtheorem{remark}[theorem]{\rm\bf Remark}
\newtheorem*{definition*}{\rm\bf Definition}

\newcommand{\nn}[1]{(\ref{#1})}


\def\sideremark#1{\ifvmode\leavevmode\fi\vadjust{\vbox to0pt{\vss
 \hbox to 0pt{\hskip\hsize\hskip1em
 \vbox{\hsize3cm\tiny\raggedright\pretolerance10000
  \noindent #1\hfill}\hss}\vbox to8pt{\vfil}\vss}}}%
                        
                                                   %

\def\idx#1{{\em #1\/}}

\author{A. Rod Gover and Josef \v Silhan}
\email{gover@math.auckland.ac.nz} \title{Commuting linear operators and algebraic decompositions} 
\begin{document}

\address{ARG: Department of Mathematics\\
  The University of Auckland\\
  Private Bag 92019\\
  Auckland 1\\
  New Zealand} \email{gover@math.auckland.ac.nz}
\address{JS: Eduard \v{C}ech center \\ 
Department of Algebra and geometry \\
Masaryk University \\
Jan\'a\v{c}kovo n\'am. 2a \\
602 00, Brno\\
Czech Republic} \email{silhan@math.muni.cz}

\maketitle

\pagestyle{myheadings} \markboth{Gover \& \v Silhan}{Commuting linear
operators and decompositions}

\begin{abstract}
  For commuting linear operators $P_0,P_1,\cdots ,P_\ell$ we describe
  a range of conditions which are weaker than invertibility. When any
  of these conditions hold we may study the composition
  $P=P_0P_1\cdots P_\ell$ in terms of the component operators or
  combinations thereof.  In particular the general inhomogeneous
  problem $Pu=f$ reduces to a system of simpler problems. These
  problems capture the structure of the solution and range spaces and, if the
  operators involved are differential, then this gives an effective way
  of lowering the differential order of the problem to be studied.
  Suitable systems of operators may be treated analogously. For a
  class of decompositions the higher symmetries of a composition $P$
  may be derived from generalised symmmetries of the component
  operators $P_i$ in the system.
\end{abstract}

\section{Introduction}
\newcommand{\bF}{\mathbb{F}} \newcommand{\bC}{\mathbb{C}}
\newcommand{\Proj}{\operatorname{Proj}}
\renewcommand{\Pr}{\operatorname{Pr}}

Given a vector space $\cV$ and a system $P_0,P_1,\cdots ,P_\ell$ of
mutually commuting endomorphims of $\cV$ we study the composition $P:=
P_0P_1\cdots P_\ell$. It is natural to ask whether we can reduce the
questions of null space and range of $P$ to the similar questions for
the component operators $P_i$. If these component operators are each
invertible then of course one trivially has a positive answer to this
question.  On the other hand experience with, for example, constant
coefficient linear ordinary differential equations shows that this is
too much to hope for in general. Here we review, discuss, and extend a
recent work \cite{GoSiDec} in which we introduce a range of conditions
which are significantly weaker than invertibility of $P$ and yet
which, in each case, enables  progress along these lines. 

Each condition we describe on the system $P_0,P_1,\cdots ,P_\ell$ is
termed an $\al$-decomposition (where $\al$ is a subset of the power
set of the index set $\{0,\cdots ,\ell\}$). The case that the
operators $P_i$ are each invertible is one extreme. Of course one may
ask that some of $P_i$ are invertible but, excluding an explicit
assumption along these lines, the next level is what we term as simply
a decomposition. This is described explicitly in Section \ref{setup}
below, but intuitively the main point is that each pair $P_i$ and
$P_j$, for $i\neq j$ in $\{0,\cdots ,\ell\}$, consistes of operators
which are relatively invertible in the sense that for example $P_i$ is
invertible on the null space of $P_j$ and vice versa. In the case that
we have a decomposition then one obtains very strong results: the null
space of $P$ is exactly the direct sum of the null spaces for the
factors $P_i$; the range of $P$ is precisely the intersection of the
range spaces for the factors; and one may explicitly decompose the
general inhomogeneous problem $P u=f $ into an equivalent system of
``lower order'' problems $P_i u_i=f$, $i=0,\cdots ,\ell$. (Note that
the fact the same inhomogeneous term $f\in \cV$ appears in $P u=f$ and
in each of the $P_i u_i=f$ problems is one signal that the
construction we discuss is not the trivial manouever of renaming
variables.)

At the other extreme of the $\al$-decompositions we ask only that the
operator $(P_0,\cdots ,P_\ell):\cV\to \oplus^{\ell+1}\cV$ is injective
with left inverse given by a system of $\cV$ endomorphisms $Q_i$,
$i=0,\cdots ,\ell$, which commute with the $P_j$s. Remarkably this is
sufficient for obtaining results along a similar line to the case of a
decomposition, but the extent of simplification is less drammatic:
issues of null space and range for $P$ are subordinated only to
similar questions for the operators $P^i:= P/P_i$.  The full summary
result for $\al$-decompositions is given in Theorem \ref{winhg}
 below.

A natural setting for the use of these results is in the study of
operators $P$ which are polynomial in a mutually commuting system of
linear operators $\cD_j:\cV\to \cV$, $j=1,\cdots, k$. This is the
subject of Section \ref{multivar}. Given a $P$ which is suitably
factored, or alternatively working over an algebraically closed field,
one sees that generically some algebraic $\al$-decomposition is
available. The main point here is the word ``algebraic''. The
$\al$-decompositions of compositions $P=P_0P_1\cdots P_\ell$ involve
identities which involve operators $Q_i$, $i=0,\cdots ,\ell$, which
invert some subsystem of $(P_0,\cdots ,P_\ell)$. In the case of an
algebraic decomposition the $Q_i$s are also polynomial in the
$\cD_j$s. So, for example, if the factors $P_i$ of $P$ are
differential and polynomial in the differential operators $\cD_i$ then
the $Q_i$ are also differential and are given in terms of the $\cD_i$
by explicit algebraic formulae. In particular pseudo-differential
calculus is avoided. These results are universal in the sense that they are 
independent of any details of the operators $\cD_j$.

The idea behind the decompositions of the equation $P_0 \cdots P_\ell
u=f$ is rather universal; for example one can extend it to systems of
equations.  In this direction our aim is mainly to demonstrate the
technique, so we shall treat, as an example (see Section
\ref{systems}), only one specific situation where the idea applies.
This is a system of $k+1$ equations where the first equation $P_0
\cdots P_\ell u=f$ is factored and the remaining ones are of the form
$R^{(j)}u = g^j$, $1 \leq j \leq k$ for given $f, g^j \in \cV$ and
$R^{(j)}\in \End (\cV)$. This may be viewed as a problem where one
wants to solve the problem $P_0 \cdots P_\ell u=f$, subject to
the conditions $R^{(j)}u = g^j$. The difference, in comparison to the
single equation problem, is that now the operators $R^{(j)}$ feature
in the relative invertibility of the factors $P_i$ of $P$. The result
is that provided one has a suitable decomposition at hand, the
original system is equivalent to a family of ``lower order'' systems
of the same type as the original one.

Finally in Section \ref{symm} we discuss symmmetries of operators.  We
 define a formal symmetry of an operator $P:\cV\to \cV$ to be an
 operator $S:\cV\to \cV$ such that $PS=S'P$ for some other operator
 $S'$ on $\cV$. For $P$ a Laplacian (or Laplacian power) type operator
 differential operator, and $S$, $S'$ differential, such symmetries are
 central in the separation of variable techniques \cite{BKM,Mi}. For
 $P=P_0P_1\cdots P_\ell$, as above, the tools we develop earlier are
 used to show that the formal symmetry algebra of $P$ is generated by
 the formal symmetry operators, and appropriate generalisations
 thereof, for the component operators $P_0P_1, \cdots ,P_\ell$.

The first author would like to thank the Royal Society of New Zealand
for support via Marsden Grant no. 06-UOA-029.
The second author was supported from the Basic Research Center no.\ LC505
(Eduard \v{C}ech Center for Algebra and Geometry) of Ministry of Education,
Youth and Sport of Czech Republic.

\section{Decompositions and $\alpha$-decompositions} \label{setup}

Let $\cV$ denote a vector space over a field $\bF$ and consider linear
operators $P_0, \ldots,P_\ell$ (i.e. endomorphisms of $\cV$) which
mutually commute.  In \cite{GoSiDec} we study properties of the
operator
\begin{equation}\label{compP}
P=P_0P_1\cdots P_\ell.
\end{equation}
For example, an obvious question is: what can we say about the kernel
and the image of $P$ in terms of related data for the component
operators $P_i$?  This is clearly straightforward if the operators
$P_i$ are invertible, but the point of our studies is that much weaker
assumptions are sufficient to obtain quite striking results. These
assumptions are captured in the notion of various ``decompositions'';
the different possible decompositions are parametrised by a nonempty
system of subsets of $L := \{0,\ldots,\ell\}$.  We shall use the
notation $P_J := \prod_{j \in J} P_j$ for $\emptyset \not= J \subseteq
L$ and we set $P_\emptyset := id_\cV$.  Further we put $P^J:= P_{L
\setminus J}$, $P^j := P^{\{j\}}$ and write $2^L$ to denote the power
set of $L$. Also $|J|$ will denote the cardinality of a set $J$.

\begin{definition*} For a linear operator $P:\cV \to \cV$, an
expression of the form \nn{compP} will be said to be an
{\em $\al$--decomposition} of $P$, with 
$\emptyset \not= \al \subseteq 2^L$, $L \not\in \al$, 
if there exist operators $Q_J \in \End(\cV)$, $J \in \al$ such that
\begin{equation} \label{al-iddec}
  \id_{\cV} = \sum_{J \in \al} Q_J P^J, \quad
  [P_i,P_k] = [Q_J,P_i] =0, \quad
  i,k \in L, J \in \al.
\end{equation}
\end{definition*}

The choice $\al := \{\{\emptyset\}\}$ means that $P$ (hence also each
of the $P_i$) is invertible. The other possible decompositions involve
weaker assumptions on the component operators. At the next level is
the $\al$--decomposition with $\al := \{ J \subseteq L \mid |J|=1 \}$
which will be termed simply a \idx{decomposition} of $P$. In this case
we still obtain, for example, that $\cN(P) = \bigoplus
\cN(P_i)$. Therefore the problem $Pu=0$, $u \in \cV$ is reduced to the
system $P_iu_i=0$, $u_i \in \cV$ for $i \in L$. In the case that the
$P_i$ are differential operators, this result shows that, given a
decomposition, the equation $Pu=0$ reduces  to the lower order system
$P_iu_i=0$.

For the general $\al$--decomposition we do not 
generally obtain a direct sum analogous to $\cN(P) = \bigoplus \cN(P_i)$
as above, however
we still get a reduction to a ``lower order'' problem. 
The key is the following theorem which is 
a central result in \cite{GoSiDec}. (See the latter for the proof 
and more details.)

\begin{theorem}\cite{GoSiDec} \label{winhg}
Assume $P:\cV\to \cV$ as in \nn{compP} is an $\al$--decomposition.
Let us fix $f\in \cV$.
There is a surjective mapping $B$ from the space of solutions
$(u_J)_{J \in \al} \in \oplus^{|\al|} \cV$ of the
problem
\begin{equation}\label{wmultiprobg}
 P_J u_J =f, \quad J \in \al.
\end{equation}
onto the space of solutions $u\in \cV$ of $P u=f$.

Writing $\cV_P^f$ for the solution space of $P u=f$ and (for $J \in \al$)
$\cV^f_J$ for the
solution space of $P_J \tilde{u}=f$.  The map
$B: \mbox{\Large\bf $\times$}_{J \in \al}  \cV^f_J \to \cV^f_P$ is given by
$$ (u_J)_{J \in \al} \mapsto \sum_{J \in \al}
   Q_J  u_J ~.
$$
A right inverse for this is
$F:\cV^f_P \to \mbox{\Large\bf $\times$}_{J \in \al} \cV^f_J$  
given (component-wise) by
$$ u\mapsto P^J u  ~;$$
on $\cV$ we have $B\circ F=id_{\cV}$.

If $\al$ satisfies $I\cap J=\emptyset$, for all $I\neq J\in \al$, then
$F$ is a 1-1 mapping and $F\circ B$ is the identity on the solution space to
\nn{wmultiprobg}.
\end{theorem}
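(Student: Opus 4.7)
The plan is to verify the four assertions in turn, all of which follow formally from the resolution of identity $\id_\cV=\sum_{J\in\al}Q_JP^J$ together with the commutativity relations in \nn{al-iddec}. Surjectivity of $B$ and the fact that $F$ is a right inverse to it will both come out of one calculation, namely $B\circ F=\id_\cV$; the remaining content is checking well-definedness of $B$ and $F$ as maps between the stated solution spaces, and establishing the left-inverse property of $F$ under the disjointness hypothesis.

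First I would check that $B$ and $F$ send solutions to solutions. If $(u_J)_{J\in\al}$ satisfies $P_Ju_J=f$ for every $J\in\al$, then using $P=P_JP^J$ and the commutativity of $Q_J$ with every $P_i$,
$$ PB((u_J))=\sum_{J\in\al}Q_JPu_J=\sum_{J\in\al}Q_JP^JP_Ju_J=\sum_{J\in\al}Q_JP^Jf=f, $$
the final equality being the resolution of identity applied to $f$. Conversely, if $Pu=f$ then $P_J(P^Ju)=Pu=f$ for every $J\in\al$, so $F(u)=(P^Ju)_{J\in\al}$ solves \nn{wmultiprobg}. Then $B\circ F(u)=\sum_{J\in\al}Q_JP^Ju=u$ directly from the identity, which establishes the first three claims together with the surjectivity of $B$ and injectivity of $F$ as formal consequences.

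The delicate step is $F\circ B=\id$ on the solution space of \nn{wmultiprobg} under the disjointness hypothesis. Fix $I\in\al$ and a solution $(u_J)$; the goal is
$$ \bigl(F\circ B((u_J))\bigr)_I = \sum_{K\in\al}Q_KP^Iu_K = u_I. $$
For $K\neq I$ the disjointness $I\cap K=\emptyset$ gives simultaneously $K\subseteq L\setminus I$ and $I\subseteq L\setminus K$, so $P_K$ divides $P^I$ and $P_I$ divides $P^K$, with the \emph{same} quotient $M_K:=\prod_{j\in L\setminus(I\cup K)}P_j$. Commutativity and $P_Ku_K=f=P_Iu_I$ then give
$$ Q_KP^Iu_K = Q_KM_KP_Ku_K = Q_KM_Kf = Q_KM_KP_Iu_I = Q_KP^Ku_I. $$
Applying the resolution of identity to $u_I$ and splitting off $K=I$,
$$ u_I = \sum_{K\in\al}Q_KP^Ku_I = Q_IP^Iu_I + \sum_{K\neq I}Q_KP^Iu_K = \sum_{K\in\al}Q_KP^Iu_K, $$
which is exactly what was wanted.

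The main obstacle is the symmetric identification $P^I/P_K = P^K/P_I = M_K$ used in the last paragraph, which depends on the pairwise disjointness of the members of $\al$ in an essential way; without it the ``off-diagonal'' terms $Q_KP^Iu_K$ cannot be matched to the corresponding terms arising from $\id_\cV$ applied to $u_I$, and $F\circ B$ should not in general be expected to be the identity.
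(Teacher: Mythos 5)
Your proof is correct and follows essentially the route the authors take: everything is read off from the resolution of identity \nn{al-iddec}, with $B\circ F=\id_\cV$ immediate and the off-diagonal terms in $F\circ B$ handled via the consistency identity $Q_KP^Iu_K=Q_KP^Ku_I$ coming from $P_Ku_K=f=P_Iu_I$ and the common factor $M_K$ under disjointness. This is the same mechanism the paper itself displays in its proof of the analogous Theorem \ref{1inhg} (the condition $P^ru_i=P^iu_r$ there), the proof of Theorem \ref{winhg} proper being deferred to \cite{GoSiDec}.
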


\begin{remark} \label{dechg} 1. The important feature of the decomposition of  
inhomogeneous problems is that in $Pu=f$ and \nn{wmultiprobg} it is the
{\bf same} $f\in \cV$ involved. So \nn{wmultiprobg} describes the range
$P$, $\cR(P)$, in terms of the range of the $P_i$:
$\cR(P)=\cap_{i=0}^\ell\cR(P_i)$.

2. The condition in the last paragraph in the theorem is satisfied by a
decomposition, but is easy to construct other examples. In any case
where this is satisfied the mappings $F$ and $B$ are bijections and we
do get a direct sum decomposition of $\cN(P)$. The point, which is
easily verified, is that from \nn{al-iddec} it follows that for each
$J\in\alpha$
$$ 
Q_JP^J:\cN(P)\to \cN(P_J)
$$
is a projection.
\end{remark}

Although the $\al$--decomposition \nn{al-iddec} is what is directly
employed in the previous theorem and its proof, there is a distinct,
but related, notion which shows what it really means for the commuting
operators $P_i$.  The following definition introduces an idea of
a decomposition which turns about to be in a suitable sense ``dual''
to the previous one.
\begin{definition*}
For a linear operator $P:\cV \to \cV$, an
expression of the form \nn{compP} will be said to be a
{\em dual $\beta$--decomposition} of $P$, $\emptyset \not= \be \subseteq 2^L$,
$\{\emptyset\} \not= \be$ if for
every $J \in \be$ there exist operators $Q_{J,j} \in \End(\cV)$, $j \in J$
such that
\begin{equation} \label{iddecdual}
  \id_{\cV} = \sum_{j \in J} Q_{J,j} P_j, \quad
  [P_i,P_k] = [Q_{J,j},P_i] =0, \quad
  i,k \in L, j \in J.
\end{equation}
\end{definition*}

To describe the suggested duality (see Proposition \ref{Peucgen} below), 
first observe that
each system $\al \subseteq 2^L$ is partially ordered by restricting
the poset structure of $2^L$. The sets of minimal and maximal elements
in $\al$ will be denoted by $\Min(\al)$ and $\Max(\al)$, respectively.
We say the system $\be \subseteq 2^L$ is a \idx{lower set}, if it is
closed under taking a subset. (That is, if $I \in \be$ and $J
\subseteq I$ then $J \in \be$.) The \idx{upper set} is defined
dually. The lower set and upper set generated by a system $\al
\subseteq 2^L$ will be denoted by
$\cL(\al) := \{ J\subseteq I~|~ I\in \al\}$
and $\cU(\al) :=  \{ J \supseteq I~|~ J \subseteq L~\mbox{and}~I\in \al\}$,
respectively.

The proof of the following is obvious.

\begin{lemma} \label{MinMax}
Let $\al \subseteq 2^L$. Then
$P = P_0 \cdots P_\ell$ satisfies the following: \\
(i) it is an $\al$--decomposition $\Longleftrightarrow$
it is a $\Max(\al)$--decomposition $\Longleftrightarrow$
it is an $\cL(\al)$--decomposition \\
(ii) it is a dual $\al$--decomposition $\Longleftrightarrow$
it is a dual $\Min(\al)$--decomposition $\Longleftrightarrow$
it is a dual $\cU(\al)$--decomposition.
\end{lemma}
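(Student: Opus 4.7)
The plan is to exploit two monotonicity observations about how the defining identities behave under containment of index sets. For part (i), whenever $J \subseteq J'$ we have $P^J = P^{J'} P_{J' \setminus J}$, so a summand $Q_J P^J$ can be rewritten as $(Q_J P_{J' \setminus J}) P^{J'}$; the factor $Q_J P_{J' \setminus J}$ still commutes with every $P_i$ because $Q_J$ and each $P_k$ do. For part (ii), if $J \subseteq J'$ then an identity $\id_\cV = \sum_{j \in J} Q_{J,j} P_j$ trivially extends to $\id_\cV = \sum_{j \in J'} Q_{J',j} P_j$ by setting $Q_{J',j} := Q_{J,j}$ for $j \in J$ and $Q_{J',j} := 0$ for $j \in J' \setminus J$.

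For part (i) I would first establish $\al$-decomposition $\iff$ $\Max(\al)$-decomposition. Since $L$ is finite so is $\al$, so for each $J \in \al$ pick some $M(J) \in \Max(\al)$ with $J \subseteq M(J)$. Using the rewrite above, collect terms to define
\[
\tilde Q_M := \sum_{J \in \al,\, M(J)=M} Q_J P_{M \setminus J}, \qquad M \in \Max(\al),
\]
which commutes with each $P_i$ and satisfies $\sum_{M \in \Max(\al)} \tilde Q_M P^M = \sum_{J \in \al} Q_J P^J = \id_\cV$. The reverse direction is trivial: extend a $\Max(\al)$-decomposition to an $\al$-decomposition by setting $Q_J := 0$ for $J \in \al \setminus \Max(\al)$. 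To bring $\cL(\al)$ into the equivalence, I would just observe $\Max(\cL(\al)) = \Max(\al)$ (any strictly larger element in $\cL(\al)$ would, by definition of $\cL$, sit inside an element of $\al$, contradicting maximality in $\al$), so $\cL(\al)$-decomposition $\iff$ $\Max(\cL(\al))$-decomposition $=$ $\Max(\al)$-decomposition $\iff$ $\al$-decomposition.

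Part (ii) is handled analogously by showing dual $\al$-decomposition $\iff$ dual $\Min(\al)$-decomposition. The forward direction is immediate (just restrict the family of identities to $J \in \Min(\al)$). For the reverse, for each $J \in \al$ pick some $M(J) \in \Min(\al)$ with $M(J) \subseteq J$ and use the extension-by-zero recipe above to build $Q_{J,j}$ from the $Q_{M(J),j}$. Finally, $\Min(\cU(\al)) = \Min(\al)$ by the dual argument, so dual $\cU(\al)$-decomposition $\iff$ dual $\Min(\cU(\al))$-decomposition $=$ dual $\Min(\al)$-decomposition $\iff$ dual $\al$-decomposition.

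There is no real obstacle here; the entire content is the bookkeeping that the commutativity requirements $[Q_J, P_i] = 0$ in \nn{al-iddec} and $[Q_{J,j}, P_i] = 0$ in \nn{iddecdual} survive the manipulations, which holds because we only ever multiply a given $Q$ by operators $P_k$ (that commute with everything in sight) or insert the zero operator. The small point worth spelling out carefully is the equality $\Max(\cL(\al)) = \Max(\al)$ (and its dual), since it is what lets $\cL(\al)$ and $\cU(\al)$ be tacked onto the chain of equivalences rather than treated separately.
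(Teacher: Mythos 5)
Your argument is correct, and it is precisely the routine bookkeeping the paper has in mind: the paper offers no written proof (it declares the lemma obvious), and your two monotonicity observations --- rewriting $Q_J P^J = (Q_J P_{J'\setminus J})P^{J'}$ for $J \subseteq J'$, and extension by zero in the dual case, together with $\Max(\cL(\al))=\Max(\al)$ and $\Min(\cU(\al))=\Min(\al)$ --- are exactly the intended justification. Nothing further is needed.
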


To formulate the relation between $\al$-- and dual
$\al$--decompositions, we need the following notation. We put $\al^u
:= 2^L \setminus \cL(\al)$
and $\al^l := 2^L
\setminus \cU(\al)$. Clearly $(\al^u)^l = \cL(\al)$ and $(\al^l)^u =
\cU(\al)$.
Also it is easily seen that
\begin{eqnarray} \label{UL}
\begin{split}
  &\al^u = \{J \subseteq L \mid \forall I \in \al:
            J \setminus I \not= \emptyset \} \\
  &\al^l = \{J \subseteq L \mid \forall I \in \al:
            I \setminus J \not= \emptyset \}.
\end{split}
\end{eqnarray}

\begin{proposition}[The duality] \label{Peucgen}
\nn{compP} is an $\al$--decomposition if and only if
it is a dual $\al^u$--decomposition. Equivalently, \nn{compP} is a
dual $\be$--decomposition if and only if it is a $\be^l$--decomposition.

In particular, \nn{compP} is a decomposition if and only if
it is a dual $\be$--decomposition for 
$\be := \{ J \subseteq L \mid |J|=2 \}$. This means
\begin{equation} \label{paireucgen}
  \id_{\cV} = Q_{i,j} P_i +Q_{j,i} P_j
\end{equation}
where $Q_{i.j}\in \End (\cV)$ and satisfy $[Q_{i,j},P_k]=0$ for every
triple of integers $(i,j,k)$ such that $0 \leq i , j,k \leq \ell$ and
$i \not= j$.
\end{proposition}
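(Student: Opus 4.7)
The plan is to reformulate both notions as ideal-theoretic statements inside the commutative subring of $\End(\cV)$ generated by the $P_i$'s together with the coefficients appearing in either \nn{al-iddec} or \nn{iddecdual}. In this language an $\al$--decomposition says precisely that the sum of principal ideals $\sum_{J \in \al} (P^J)$ coincides with the full ring, whereas a dual $\be$--decomposition says that, for each $J \in \be$, the ideal generated by $\{P_j : j \in J\}$ is the full ring. The heart of the proposition is therefore the combinatorial equivalence of these two families of conditions under $\be = \al^u$; the ``equivalently'' and final clauses will then follow formally from Lemma \ref{MinMax} together with the identity $(\be^l)^u = \cU(\be)$.

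For the direction ``$\al$--decomposition $\Rightarrow$ dual $\al^u$--decomposition'', start from $\id = \sum_{J \in \al} Q_J P^J$ and fix an arbitrary $K \in \al^u$. By \nn{UL} one has $K \setminus J \not= \emptyset$ for every $J \in \al$, so one may choose some $k(J) \in K \setminus J$. Since $k(J) \in L \setminus J$, the factor $P_{k(J)}$ divides $P^J$, and grouping the terms by the value of $k(J)$ rewrites the identity as $\id = \sum_{k \in K} Q_{K,k} P_k$, where $Q_{K,k} := \sum_{J \in \al,\, k(J) = k} Q_J \prod_{i \in L \setminus J,\, i \not= k} P_i$, each commuting with every $P_i$. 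This is exactly \nn{iddecdual} for this $K$.

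The converse is the substantive half and the step I expect to be the main obstacle. Given, for each $K \in \al^u$, an identity $\id = \sum_{k \in K} Q_{K,k} P_k$, I would multiply all of them together:
$$ \id \;=\; \prod_{K \in \al^u} \id \;=\; \sum_{\phi} \prod_{K \in \al^u} Q_{K,\phi(K)} P_{\phi(K)}, $$
the outer sum ranging over all choice functions $\phi : \al^u \to L$ with $\phi(K) \in K$. The key combinatorial claim is that, for every such $\phi$, the set $M(\phi) := L \setminus \phi(\al^u)$ lies in $\cL(\al)$. Indeed, were $M(\phi) \in \al^u$, the factor of the product indexed by $K = M(\phi)$ would contribute $\phi(M(\phi)) \in M(\phi) = L \setminus \phi(\al^u)$, contradicting $\phi(M(\phi)) \in \phi(\al^u)$. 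Hence $M(\phi) \subseteq J_\phi$ for some $J_\phi \in \al$, so $L \setminus J_\phi \subseteq \phi(\al^u)$, which means $P^{J_\phi} = \prod_{k \in L \setminus J_\phi} P_k$ divides the $\phi$--term. Regrouping the sum by $J \in \al$ then exhibits $\id$ in $\sum_{J \in \al} (P^J)$, i.e.\ as an $\al$--decomposition of the form \nn{al-iddec}.

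For the final clause, specialise to $\al = \{\{j\} : j \in L\}$, so that the $\al$--decomposition is just a decomposition in the sense of Section \ref{setup}. A direct application of \nn{UL} gives $\al^u = \{K \subseteq L : |K| \geq 2\}$ and hence $\Min(\al^u) = \{J \subseteq L : |J| = 2\}$; invoking Lemma \ref{MinMax}(ii) replaces the dual $\al^u$--decomposition by the dual $\{J : |J|=2\}$--decomposition, which is precisely the pairwise identity \nn{paireucgen}.
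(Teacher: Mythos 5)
Your argument is correct. Note that this paper does not actually prove Proposition \ref{Peucgen} --- it is quoted from \cite{GoSiDec}, with only the remark that the proof there is a constructive recipe in both directions --- and your two constructions (for fixed $K\in\al^u$, choosing $k(J)\in K\setminus J$ and regrouping $\sum_{J\in\al}Q_JP^J$ by the value of $k(J)$; conversely, expanding $\prod_{K\in\al^u}\bigl(\sum_{k\in K}Q_{K,k}P_k\bigr)$ over choice functions $\phi$ and using that $L\setminus\phi(\al^u)\in\cL(\al)$, so each term is divisible by some $P^{J}$ with $J\in\al$) are precisely such explicit recipes, so this is essentially the intended argument; the reduction of the ``equivalently'' and pairwise clauses via $(\be^l)^u=\cU(\be)$ and Lemma \ref{MinMax} is also fine. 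One cosmetic point: the subring of $\End(\cV)$ generated by the $P_i$ and the various $Q$'s need not be commutative (the $Q$'s are only required to commute with the $P_i$, not with each other), so the opening ``ideal-theoretic'' framing should be read with the $P_i$ central rather than the ring commutative; this is harmless, since your manipulations only ever move $P$'s past $Q$'s.
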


When \nn{paireucgen} is satisfied we shall say that the operators $P_i$
and $P_j$ are \idx{relatively invertible}. The dual version of a
(true) decomposition is the dual $\be$--decomposition for $\be = \{ J
\subseteq L \mid |J|=2 \}$; this will be termed simply a \idx{dual
decomposition}.  The general dual $\be$--decomposition means that for
every $J \in \be$, the operators $P_j$, $j \in J$ are relatively
invertible.

\begin{remark} 
1.\ The proof of Proposition \ref{Peucgen} in \cite{GoSiDec} is
constructive in the sense that starting with an $\al$--decomposition
\nn{al-iddec}, there is a simple recipe which describes how to
construct the operators $Q_{J,j}$ from \nn{iddecdual}, as required for the
dual $\al^u$--decomposition, and then vice versa.

2.\ The operators $Q_J$ in \nn{al-iddec} and 
$Q_{J,j}$ in \nn{iddecdual} are not given uniquely. Also we can see 
an obvious duality between $Q_{i,j}$, $Q_{j,i}$  and
$P_i$, $P_j$ in \nn{paireucgen}. This is discussed in 
\cite{GoSiDec} where notion of (dual) $\al$--decompositions is 
formulated in the language of Koszul complexes.
\end{remark}

From the practical point of view, given an operator $P: \cV \to \cV$
as in \nn{compP}, to apply Theorem \ref{winhg} one needs to show
whether $P$ is a (dual) $\al$--decomposition and also to determine
explicitly the corresponding operators $Q_J$ (or $Q_{J,j}$ in the dual
case). Also, one can ask which choice of $\al$ yields the most
suitable $\al$--decomposition.  Another strategy might be to
``regroup'' the operators $P_i$ (e.g.\ to consider the product
$P_iP_j$ as a single factor) and then to seek a better
$\al$--decomposition. In the case that the operator $P$ is polynomial
in other mutually commuting operators $\cD_j: \cV \to \cV$, there is a
category of decompositions which arise algebraically from the formula
for $P$. Within this category all these questions can all be solved in
a completely algorithmic way.  

\section{Operators polynomial in commuting endomorphisms and algebraic 
decompositions}
\label{multivar}

Writing  $\cV$ to denote a vector space over some field $\bF$,
suppose that $\cD_i:\cV\to \cV$, $i=1,\cdots ,k$, are non-trivial
linear endomorphisms that are mutually commuting:
$\cD_i\cD_j=\cD_j\cD_i$ for $i,j\in\{1,\cdots ,k\}$ . We obtain a
commutative algebra $\bF[\bD]$ consisting of those endomorphisms
$\cV \to \cV$ which may be given by expressions polynomial (with
coefficients in $\bF$) in the $\cD_i$.  We write $\bx =
(x_1,\ldots,x_k)$ for the multivariable indeterminate, and $\F[\bx]$
for the algebra of polynomials in the variables $x_1,\ldots,x_k$ over
the field $\F$. There is a unital algebra epimorphism from 
$\bF[\bx]$ onto $\bF[\bD]$ given by formally replacing each variable 
$x_i$, in a polynomial, with $\cD_i$. 

The simplest case is when $k=1$, that is operators polynomial in a
single operator $\cD$. We write $\bF[\cD]$ for the algebra of these.
Since any linear operator $\cD:\cV\to \cV$ is trivially self-commuting
there is no restriction on $\cD$. Thus this case is an important
specialisation with many applications.  In this setting we may quickly
find algebraic decompositions.  Let us
write $\bF[x]$ for polynomials in the single indeterminate $x$ and 
illustrate the idea with a very simple case. 
 Consider a polynomial $P[x]=(x+\lambda_0)(x+\lambda_1)
\cdots (x+\lambda_\ell)$ where for $i=0,\cdots ,\ell$, the
$\lambda_i\in \bF$ are are mutually distinct (i.e. $i\neq j$
$\Rightarrow$ $\lambda_i\neq \lambda_j $). Related to $P[x]$ are the
polynomials obtained by omitting a factor
$$ 
P^i[x]=\prod_{i\neq j=0}^\ell(x+\lambda_j).
$$ Then we associate to $P[x]$ the following decomposition of the unit
in $ \bF[x]$.
\begin{lemma}\label{pf0pol}
$$
1=\alpha_0P^0[x]+\alpha_1 P^1 [x]+\cdots +\alpha_\ell P^\ell [x]~,
$$
where 
$$
\alpha_i = \prod_{i\neq j =0}^{j=\ell} \frac{1}{\la_j-\la_i}~.
$$
\end{lemma}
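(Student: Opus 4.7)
The identity to prove is a polynomial identity in $\F[x]$, so the approach is purely algebraic and has two equivalent natural routes: Lagrange interpolation at the points $x=-\lambda_k$, or partial fraction decomposition of $1/P[x]$. I would favor the interpolation argument because it makes direct use of the mutual distinctness of the $\lambda_i$.

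The plan is as follows. Set $Q[x] := \sum_{i=0}^\ell \alpha_i P^i[x]$. Since each $P^i[x]$ has degree $\ell$, the polynomial $Q[x]$ has degree at most $\ell$. The key observation is that for $i \neq k$, the product $P^i[x] = \prod_{j \neq i}(x+\lambda_j)$ contains the factor $(x+\lambda_k)$ (since $k \neq i$ means $k$ appears in the product), so $P^i[-\lambda_k]=0$. Thus evaluating at $x = -\lambda_k$ leaves only the term $i=k$:
$$
Q[-\lambda_k] = \alpha_k \, P^k[-\lambda_k] = \Bigl(\prod_{j \neq k}\frac{1}{\lambda_j - \lambda_k}\Bigr) \prod_{j \neq k}(\lambda_j - \lambda_k) = 1.
$$
This is where distinctness of the $\lambda_i$'s is essential: otherwise $\alpha_k$ is not defined and $P^k[-\lambda_k]$ could vanish.

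Since $Q[x]$ is a polynomial of degree at most $\ell$ which takes the value $1$ at the $\ell+1$ distinct points $-\lambda_0, -\lambda_1, \ldots, -\lambda_\ell$, the polynomial $Q[x]-1$ has at least $\ell+1$ roots but degree at most $\ell$, forcing $Q[x]=1$ identically. No step is truly difficult here; the only thing one has to be careful about is the sign/indexing bookkeeping in writing $P^i[-\lambda_k]$ as a product of differences $(\lambda_j - \lambda_k)$, which matches precisely the reciprocals appearing in $\alpha_k$. This completes the proof.
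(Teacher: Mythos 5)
Your proof is correct, but it takes a different route from the paper. The paper argues by induction on the number of factors: it multiplies the two identities (provided by the inductive hypothesis) associated with $P^\ell[x]$ and $P^0[x]$ by $(x+\la_\ell)$ and $(x+\la_0)$ respectively, subtracts, and divides by $\la_\ell-\la_0$, checking that the resulting coefficients are exactly the $\alpha_i$. That argument is in the spirit of the Euclidean--algorithm/B\'ezout constructions the paper emphasises later (it literally builds the cofactors from smaller identities), at the cost of some coefficient bookkeeping. Your argument is the Lagrange interpolation one: $Q[x]:=\sum_i\alpha_iP^i[x]$ has degree at most $\ell$, equals $1$ at the $\ell+1$ distinct points $-\la_0,\dots,-\la_\ell$ because $P^i[-\la_k]=0$ for $i\neq k$ and $\alpha_kP^k[-\la_k]=1$, so $Q[x]-1$ is a polynomial of degree at most $\ell$ with $\ell+1$ distinct roots and hence is the zero polynomial. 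This is shorter and conceptually cleaner, and the root-counting step is valid over an arbitrary field (it is a statement about formal polynomials, not polynomial functions, so finite fields cause no trouble); it also correctly uses the distinctness of the $\la_i$ exactly where the paper does, namely in defining $\alpha_k$ and in guaranteeing $P^k[-\la_k]\neq 0$ and the distinctness of the interpolation nodes. The only thing your approach gives up is the explicitly recursive construction of the identity, which is closer to how the paper later proposes to compute such decompositions algorithmically.
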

\begin{proof} For $\ell=0$ we take the first display to mean $1=1$. 
For $\ell=1$ the Lemma states that 
$$
1=\frac{1}{\la_1-\la_0}(x+\la_1)+ \frac{1}{\la_0-\la_1}(x+\la_0)
$$ which is clearly true.  
Now assume that the result holds for all polynomials with $\ell-1$
factors. In particular associated to
$P^\ell[x]=\prod_{i=0}^{\ell-1}(x+\la_i)$ and
$P^0[x]=\prod_{i=1}^{\ell}(x+\la_i)$ we have the identities
\begin{equation}\label{one}
1= \beta_0 Q_0[x]+\cdots + \beta_{\ell-1}Q_{\ell-1}[x]
\quad \mbox{and}\quad 
1= \gamma_1 R_1[x]+\cdots + \gamma_{\ell}R_{\ell}[x], 
\end{equation}
respectively, where we have
$$ Q_i[x]= \prod_{i\neq j=0}^{\ell-1}(x+\lambda_j), \quad \beta_i
=\prod_{i\neq j =0}^{\ell-1} \frac{1}{\la_j-\la_i}, \quad \mbox{for}
\quad i=0,\cdots ,\ell-1
$$
and 
$$
\quad R_k[x]=
\prod_{k\neq j=1}^\ell(x+\lambda_j), \quad \gamma_k = \prod_{k\neq j 
=1}^{\ell} \frac{1}{\la_j-\la_k},  \quad \mbox{for} \quad k=1,\cdots ,\ell~.
$$ Now multiplying the first identity of \nn{one} by $(x+\la_\ell)$,
multiplying the the second identity of \nn{one} by $(x+\la_0)$ and then
taking the difference yields
$$ \la_\ell-\la_0= \beta_0 P^0[x]+(\beta_1-\gamma_1)P^1[x]+\cdots
+(\beta_{\ell-1}-\gamma_{\ell-1})P^{\ell-1}[x]-\gamma_\ell P^{\ell}[x].
$$ This establishes the result as $\beta_0/(
\la_\ell-\la_0)=\alpha_0$, $\gamma_\ell/(
\la_\ell-\la_0)=-\alpha_\ell$, while for $i=1,\cdots, \ell-1$ we have
$$
\frac{\beta_i-\gamma_i}{\la_\ell-\la_0}
=\big( \prod_{i\neq j=1}^{\ell-1} \frac{1}{\la_j-\la_i}\big)
\big( \frac{1}{\la_0-\la_i}- \frac{1}{\la_\ell-\la_i}\big)
\frac{1}{\la_\ell-\la_0} =\alpha_i~.
$$
\end{proof}
Thus we have the following.
\begin{proposition}\label{sim}
For $P[\cD]=(\cD+\lambda_0)(\cD+\lambda_1)
\cdots (\cD+\lambda_\ell)$ we have a decomposition given by 
$$
id_\cV= Q_0 P^0[\cD]+\cdots +Q_\ell P^\ell[\cD],
$$ where $P^i=\prod_{i\neq j=0}^\ell (\cD+\la_j)$ and $Q_i=\alpha_i$
for $i=0,\cdots ,\ell$.
\end{proposition}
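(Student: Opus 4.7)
The plan is to deduce this directly from Lemma \ref{pf0pol} via the algebra epimorphism $\F[x] \to \F[\cD]$ described at the start of Section \ref{multivar}, and then to verify that the resulting identity fits the formal definition of a decomposition given in Section \ref{setup}.

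First I would apply Lemma \ref{pf0pol} to obtain the polynomial identity
\[
1 = \alpha_0 P^0[x] + \alpha_1 P^1[x] + \cdots + \alpha_\ell P^\ell[x]
\]
in $\F[x]$. Next I would apply the unital algebra epimorphism $\F[x] \to \F[\cD]$ which sends $x \mapsto \cD$; since this is a homomorphism of unital algebras, it sends $1$ to $\id_\cV$ and $P^i[x]$ to $P^i[\cD]$, yielding
\[
\id_\cV = \alpha_0 P^0[\cD] + \cdots + \alpha_\ell P^\ell[\cD].
\]
This is exactly the identity claimed, with $Q_i = \alpha_i$.

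It remains to check that this identity satisfies the conditions \eqref{al-iddec} required to constitute a decomposition in the sense of Section \ref{setup}, with index family $\al = \{\{i\} \mid i \in L\}$ so that $P^{\{i\}} = P^i$ and $Q_{\{i\}} = Q_i = \alpha_i$. The factors $P_i = \cD + \lambda_i$ are all elements of the commutative algebra $\F[\cD]$, so they mutually commute; the coefficients $Q_i = \alpha_i$ are scalars in $\F$ and therefore commute with every element of $\End(\cV)$, in particular with each $P_j$. Hence all the commutation requirements in \eqref{al-iddec} are automatic.

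Since Lemma \ref{pf0pol} has already done the real algebraic work, I do not expect any serious obstacle here; the only thing to be careful about is noting that the hypotheses of Lemma \ref{pf0pol} (distinct $\lambda_i$) are implicitly required for the formulas for the $\alpha_i$ to make sense, and that the algebra homomorphism $\F[x] \to \F[\cD]$ transports both sides of the polynomial identity simultaneously to the endomorphism identity in $\F[\cD]\subseteq \End(\cV)$.
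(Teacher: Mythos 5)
Your proposal is correct and follows exactly the paper's route: Proposition \ref{sim} is stated as an immediate consequence of Lemma \ref{pf0pol}, obtained by pushing the polynomial identity through the epimorphism $\F[x]\to\F[\cD]$, with the commutation conditions in \nn{al-iddec} automatic since the $P_i$ lie in the commutative algebra $\F[\cD]$ and the $Q_i=\alpha_i$ are scalars. Your extra remarks (distinctness of the $\lambda_i$, unitality of the homomorphism) are just the same points made explicit.
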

Thus we may immediately apply Theorem \ref{winhg}, and in fact the
stronger variants for decompositions as in \cite{GoSiDec}, to reduce
homogeneous or inhomogeneous problems for $R$ to corresponding
problems of the form $(\cD +\la_i)u_i=f$. 
\begin{remark}
 However the point we wish to emphasise heavily is that we used no
information about the operator $\cD$ to obtain the decomposition in
Proposition \ref{sim}; $\cD$ can be any linear operator $\cD:\cV\to
\cV$ on any vector space $\cV$. Thus we will say that Proposition
\ref{sim} is an {\em algebraic decomposition} of $P[\cD]$. For
specific operators $\cD$ there may be other decompositions (or
$\al$-decompositions) that do use information about $\cD$. 

For operators polynomial in mutually commuting operators $\cD_i$ we
generically may obtain $\al$-decompositions that are algebraic in this
way; that is they arise, via the algebra epimorphism $\bF[\bx]\to
\bF[\bD]$, from a polynomial decomposition of the unit in
$\bF[\bx]$. These are universal $\al$-decompositions that are
independent of the details of the $\cD_i$, $i=1,\cdots, k$.
\end{remark}

Via the Euclidean algorithm, and related tools more powerful for these
purposes, we may easily generalise Lemma \ref{pf0pol} to obtain
decompositions for operators more interesting than $P[\cD]$ as in the
Proposition above. The case of a operators polynomial in a single
other operator is treated in some detail in \cite{GoSiDec} so let us
now turn our attention to some general features which appear more 
in the multivariable case $ k \geq 2$.

Given polynomials $P_0[\bx], P_1[\bx], \cdots ,P_{\ell}[\bx] \in \F[\bx]$
consider the product polynomial
\begin{equation} \label{Pmulti}
  P[\bx] = P_0[\bx] P_1[\bx] \cdots P_{\ell}[\bx].
\end{equation}
With $L=\{0,1,\cdots ,L\}$, we carry over, in an obvious way, the
labelling from Section \ref{setup} via elements
of the power set $2^L$; products of the polynomial $P_i[\bx]$ are
labelled by the corresponding subset of $L$. For example for
$J\subseteq L$, $P_J[\bx]$ means $\prod_{j\in J}P_j[\bx]$, while
$P^J[\bx]$ means $P_{L\setminus J}[\bx]$.

Considering the dual $\be$--decompositions, we need to verify that for each 
$ I \in \be$ we have
\begin{equation}\label{duale}
  1 \in \langle P_i[\bx]~:~ i\in I  \rangle
\end{equation}
where $\langle .. \rangle$ denotes the ideal in $\F[\bx]$ generated by
the enclosed polynomials.  It is useful to employ algebraic geometry
to shed light on this problem, in particular to use the ``algebra --
geometry dictionary'', see for example \cite[Chapter 4]{CLS}.  Let us
write $\cN(S[\bx]) := \{ \bx \in \F^k \mid S[\bx]=0 \}$ for the
algebraic variety determined by the polynomial $S[\bx]$.  The ideal
$\langle P_i[\bx] \rangle$ corresponds to the variety $\cN_i :=
\cN(P_i[\bx])$ and the previous display clearly requires $\bigcap_{i
\in I} \cN_i = \emptyset$.  In fact if $\F$ is algebraically closed then
the latter condition is equivalent to \nn{duale}. (This follows from
the Hilbert's Nullstellensatz, see \cite{CLS}.) Since generically
$\bigcap_{i \in I} \cN_i$ has codimension $|I|$, we conclude that (for
$\F$ algebraically closed) if $|I| \geq k+1$ then in the generic case
\nn{duale} will be satisfied.  

\vspace{2ex}
\noindent
\idx{(Dual) decompositions and $\al$-decompositions}
\vspace{1ex}

Aside from invertible $P$, the decompositions are the ``best
possible'' among all $\al$--decompositions (and similarly for the dual
versions).  However they require $2 \geq k+1$ in the generic case (we
need $|I|=2$ in \nn{duale}) which holds only for one variable
polynomials.  On the other hand there is always a chance that we
obtain a decomposition by a suitable ``regrouping'' of the
polynomials in \nn{Pmulti}. So we can proceed as follows.

Any polynomial $P[\bx] \in \F[\bx]$ can be decomposed into
irreducibles.  If we were to take $P_i[\bx]$ in \nn{Pmulti} 
as such irreducibles then \nn{Pmulti} would not be generally the 
decomposition in the multivariable case.  
To obtain the decomposition one can consider products $P_I[\bx]$
as single factors in \nn{Pmulti} for suitable $I \subseteq L$.
This reduces the number of factors (i.e.\ $\ell$);
to find an optimal (i.e.\ with $\ell$ maximal) version of this
procedure we use the following lemma.

\begin{lemma} \label{Pdecomp}
(i) Assume $P[\bx]$ has the form \nn{Pmulti} satisfying 
$1 \in \langle P_i[\bx], P_j[\bx]\rangle$ for all
$0 \leq i<j \leq \ell$  and
$P[\bx] = R_0[\bx] \cdots R_r[\bx]$ is a decomposition of $P[\bx]$
into irreducible polynomials $R_i[\bx]$. If
$1 \not\in \langle R_p[\bx], R_q[\bx] \rangle$
for some $0 \leq p,q \leq r$
then there exists $0 \leq i \leq \ell$ such that
$P_i[\bx] = R_p[\bx]R_q[\bx]P'_i[\bx]$ for a polynomial $P'_i[\bx]$.

(ii) Assume the polynomials $S_0[\bx],\ldots,S_s[\bx]$ and
$T_0[\bx],\ldots,T_t[\bx]$ satisfy $1 \in \langle S_i[\bx],T_j[\bx] \rangle$
for all $0 \leq i \leq s$ and $0 \leq j \leq t$ .
Then  $1 \in \langle S[\bx], T[\bx] \rangle$ where
$S[\bx] = S_0[\bx] \cdots S_s[\bx]$ and $T[\bx] = T_0[\bx] \cdots T_t[\bx]$.
\end{lemma}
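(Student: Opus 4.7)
My plan is to use that $\F[\bx]$ is a unique factorization domain, so irreducibles are prime, together with the standard fact that coprimality (in the sense of ideals generating the whole ring) is preserved under products of factors.

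For part (i), given the irreducible factorization $P[\bx] = R_0[\bx] \cdots R_r[\bx]$, I would first observe that each $R_p$ divides exactly one of the $P_i$. Indeed, $R_p$ divides the product $P_0[\bx]\cdots P_\ell[\bx]$ and is prime, so it divides at least one $P_i$; if it divided two distinct factors $P_i$ and $P_k$, then $R_p \in \langle P_i[\bx], P_k[\bx]\rangle = \F[\bx]$, contradicting that an irreducible is a non-unit. Let $i(p)$ denote the unique such index. Now suppose $1 \notin \langle R_p[\bx], R_q[\bx]\rangle$. If $i(p) \neq i(q)$ then $\langle R_p[\bx], R_q[\bx]\rangle \subseteq \langle P_{i(p)}[\bx], P_{i(q)}[\bx]\rangle = \F[\bx]$, contradiction. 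Hence $i := i(p) = i(q)$. Finally, since the multiplicity of any irreducible in $P[\bx]$ equals the sum of its multiplicities in the $P_k[\bx]$, and $R_p$ (respectively $R_q$) contributes multiplicity only from $P_i[\bx]$, comparing multiplicities shows that $R_p[\bx] R_q[\bx]$ divides $P_i[\bx]$.

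For part (ii) my plan is a two-stage expansion argument. Starting from identities $1 = A_{ij}[\bx] S_i[\bx] + B_{ij}[\bx] T_j[\bx]$, I would first fix $j$ and multiply the $s+1$ identities over $i = 0,\ldots,s$. Expanding the product, every term except $\prod_i A_{ij}[\bx] S_i[\bx] = \big(\prod_i A_{ij}[\bx]\big) S[\bx]$ carries a factor of $T_j[\bx]$, yielding $1 \in \langle S[\bx], T_j[\bx]\rangle$. Then I repeat the same trick on the resulting identities $1 = C_j[\bx] S[\bx] + D_j[\bx] T_j[\bx]$: multiplying over $j = 0,\ldots,t$ and expanding, all terms other than $\big(\prod_j D_j[\bx]\big) T[\bx]$ carry a factor of $S[\bx]$, so $1 \in \langle S[\bx], T[\bx]\rangle$.

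The main obstacle is the bookkeeping in part (i) when $R_p$ and $R_q$ are associate but occupy distinct slots of the irreducible factorization of $P[\bx]$: in that case one must know that the common $P_i[\bx]$ actually contains this irreducible with multiplicity at least two. This is exactly what the observation "each irreducible factor of $P[\bx]$ is contributed by a unique $P_i[\bx]$" buys us, so once that lemma is established the rest is essentially formal.
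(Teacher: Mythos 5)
Your proof is correct and follows essentially the same route as the paper: part (i) rests on the same contrapositive observation that if $R_p$ divides $P_i$ and $R_q$ divides $P_j$ with $i\neq j$, then $\langle P_i,P_j\rangle\subseteq\langle R_p,R_q\rangle$ forces $1\in\langle R_p,R_q\rangle$, and part (ii) is the same multiply-the-B\'ezout-identities-and-expand trick, which the paper organises as an induction on $s+t$ while you take the full product in one step. Your write-up is in fact slightly more complete than the paper's, since you make explicit the primality/uniqueness of the factor containing each irreducible and the multiplicity bookkeeping when $R_p$ and $R_q$ are associates, points the paper leaves implicit.
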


\begin{proof}
(i) Assume the case $P_i[\bx] = R_p[\bx]P''_i[\bx]$ and
$P_j[\bx] = R_q[\bx]P''_j[\bx]$
for some $i \not= j$. Then $1 \in \langle P_i[\bx], P_j[\bx] \rangle$
implies $1 \in \langle R_p[\bx], R_q[\bx] \rangle$.

(ii) We use the induction with respect to $s+t$. Clearly the lemma holds for
$s+t=0$ so assume $s+t \geq 1$. Then e.g.\ $t \geq 1$ so by the inductive
hypothesis we get $1 \in \langle S[\bx], \tilde{T}[\bx] \rangle$
and $1 \in \langle S[\bx], T_t[\bx] \rangle$ where
$\tilde{T}[\bx] = T_0[\bx] \cdots T_{t-1}[\bx]$. This means
$$ 1 = a[\bx] S[\bx] + b[\bx] \tilde{T}[\bx]
   \quad \mbox{and} \quad
   1 = c[\bx] S[\bx] + d[\bx] T_t[\bx] $$
for some polynomials $a[\bx]$, $b[\bx]$, $c[\bx]$ and $d[\bx]$.
Now multiplying the right hand sides of these two equalities and using
$T[\bx] = \tilde{T}[\bx]T_t[\bx]$, the lemma follows.
\end{proof}

We will use this lemma as follows. We start with the decomposition of
$P[\bx] = R_0[\bx] \cdots R_r[\bx]$ into
irreducibles. Consider the graph with vertices $v_0,\ldots,v_r$,
and an edge $\{v_p,v_q\}$ for every $0 \leq p,q \leq r$ such that
$1 \not\in \langle R_p[\bx], R_q[\bx] \rangle$.
Denote the number of connected components by $\ell+1$ and the set of vertices
in the $i$th component by $G_i$, $0 \leq i \leq \ell$. We put
$$ P_i[\bx] := \prod_{v_u \in G_i} R_u[\bx], \quad i = 0,\ldots,\ell
$$ 
which yields the form \nn{Pmulti} of $P[\bx]$. This satisfies
$1 \in \langle P_i[\bx], P_j[\bx]\rangle$ for all 
$0 \leq i<j \leq \ell$ according to Lemma \ref{Pdecomp} (ii) and
thus \nn{Pmulti} is the decomposition.  Moreover,
it follows from the part (i) of the lemma that no form 
$P[\bx] = P'_0[\bx] \cdots P'_{\ell'}[\bx]$ with $\ell'>\ell$ can satisfy 
the condition $1 \in \langle P_i[\bx], P_j[\bx]\rangle$ for all
$0 \leq i<j \leq \ell'$. 
(The discussed graph has $\ell+1$
   connected components and we need to ``regroup'' the vertices
   (corresponding to irreducible components) into $\ell'+1$ groups
   corresponding to $\ell'+1$ polynomials $P'_i[\bx]$. If $\ell'>\ell$
   then there is a pair of irreducible polynomials $R_p[\bx],
   R_q[\bx]$ such that $1 \not\in \langle R_p[\bx], R_q[\bx] \rangle$
   which satisfy that $R_p[\bx]$ is a factor of $P'_i[\bx]$ and
   $R_p[\bx]$ is a factor of $P'_j[\bx]$ for some $0 \leq i < j \leq
   \ell'$. This is a contradiction with Lemma \ref{Pdecomp} (i).)

\begin{remark}
From the geometrical point of view, 
if \nn{Pmulti} is a decomposition then $\cN := \cN(P) = \bigcup \cN_i$
is the disjoint union. The previous paragraph describes
how to find such decomposition for the variety $\cN$ corresponding
to any $P[\bx]$ given by \nn{Pmulti}. 
Moreover, the obtained decomposition is minimal in the sense that
$\cN_i$ in $\cN = \bigcup \cN_i$ cannot be disjointly decomposed into 
smaller (nonzero) varieties.
\end{remark}

Generically, the (dual) decompositions are not available in the 
multivariable case. The ``optimal'' choice among all possible 
$\al$--decompositions (in the sense of \cite{GoSiDec}) is as follows.
The subsets $\be \subseteq 2^L$ are partially ordered by inclusion
(i.e.\ now we use the poset structure of $2^{2^L}$).  Given an
operator $P$ in the form \nn{Pmulti} consider the family $\Gamma$ of
systems $\be$ such that \nn{Pmulti} is a dual
$\be$--decomposition. Then $\Gamma$ has the greatest element $\be_P =
\bigcup_{\be \in \Gamma} \be$. Then an ``optimal'' choice for the
dual $\be$--decomposition of $P$ is $\be := \Min(\be_P)$.  (We want
to have in $\be$ to the smallest possible subsets of $L$. So if the
$P_i$s are not invertible then the case of a dual decomposition may be
regarded as the best we can do. With this philosophy we thus take
$\be_P$.  Then using Lemma \ref{MinMax} we take $\be := \Min(\be_P)$
as it is easier to work with a smaller number of subsets.)
Consequently, we obtain the optimal choice $\al := \Max((\be_P)^l)$
for the $\al$--decomposition of $P$.

\vspace{2ex}
\noindent
\idx{Algorithmic approach and the Gr\"obner basis}
\vspace{1ex}

Summarising, starting with $P[\bx]$, the problem of obtaining a factoring
\nn{Pmulti} which is the decomposition or a suitable 
$\al$--decomposition  boils down to testing the  condition
$1 \in \langle P_i[\bx] \mid i \in I \rangle$ 
for various subsets $I \subseteq L$.
This can be done using
Buchberger's algorithm
which computes a canonical basis (for a given ordering of monomials)
of the ideal $\langle P_i[\bx] \mid i \in I \rangle$, a so called
reduced Gr\"{o}bner basis \cite{CLS}.
If $1 \in \langle P_i[\bx] \mid i \in I \rangle$,
this basis has to be $\{1\}$.

In practice for reasonable examples this algorithm may be implemented in,
for example, Maple.  Actually, one can save some computation and
moreover obtain the explicit form of the operators 
$Q_J$ from \nn{al-iddec} or $Q_{J,j}$ from \nn{iddecdual} by 
using Buchberger's
algorithm without seeking the reduced basis.
Consider the ideal $I := \langle G \rangle \subseteq \F[\bx]$ (for a
set of polynomials $G$) such that $1 \in I$, and a Gr\"{o}bner basis
$G'$ of $I$. Then $\al \in G'$ for some scalar $\al \in \F$.  The
Buchberger's algorithm starts with $G$ and builds $G' \supseteq G$ by
adding various linear combinations (with coefficients in $\F[\bx]$) of
elements from $G$.  So when $\al$ is added (and the algorithm stops),
it has the required form which expresses $1$ as a linear 
combination of elements of $G$ (up to a scalar multiple $\al$).

\vspace{2ex}
\noindent
\idx{Example}
\vspace{1ex}

We shall demonstrate the previous observations on 
the operator 
\begin{align*}
  P[\cD_1,\cD_2] :=
  & \cD_1^{5}\cD_2 + \cD_1^4\cD_2^2 + 3\cD_1^4\cD_2 + \cD^4 
    + 3\cD_1^3\cD_2^{2} + 3\cD_1^{3}\cD_2 \\
  & + 2\cD_1^{3} + 3\cD_1^2\cD_2^2 + \cD_1^2\cD_2 + \cD_1\cD_2^2 - \cD_1.
\end{align*}
This correspond (after factoring) to the polynomial
\begin{equation} \label{Pexample}
  P[x,y] = (x+1)(xy+y+1)x(x^2+xy+x+y-1).
\end{equation}
For example, taking
  $\cD_1=\frac{\partial}{\partial x}$ and
  $\cD_2=\frac{\partial}{\partial y}$ the differential operator
  $P[\frac{\partial}{\partial x}, \frac{\partial}{\partial y}]:
  C^{\infty}(\R^2) \to C^{\infty}(\R^2)$ is a the sixth order
  differential operator.  We apply the previous observation and
  Theorem \ref{winhg} to reduce the corresponding differential
  equation to a lower order problem. In general, we start with the
  equation $P[\cD_1,\cD_2]u =f$ for a given $f \in \cV$.

First we shall find the optimal (in the sense as above) dual 
$\be$--decomposition, $\be \subseteq 2^{\{0,1,2,3\}}$ 
and/or whether we can obtain  the dual 
decomposition after an appropriate regrouping of the factors in
\nn{Pexample}. Many steps can be done directly in Maple. The factors
$$ P_0[x,y] = x+1, P_1[x,y] = xy+y+1, P_2[x,y] = x, 
   P_3[x,y] = x^2+xy+x+y-1 $$
are irreducible; this can be verified by the command {\tt IsPrime}.
Using {\tt gbasis} we see that 
\begin{eqnarray} \label{ideals}
\begin{split}
  &1 \in \langle P_0[x,y], P_i[x,y] \rangle, \quad i \in \{1,2,3\}, \\
  &1 \not\in \langle P_i[x,y], P_j[x,y] \rangle, \quad i,j \in \{1,2,3\} 
  \quad \mbox{and}\\
  &1 \in \langle P_1[x,y], P_2[x,y], P_3[x,y] \rangle.
\end{split}
\end{eqnarray}
From the first two lines in \nn{ideals} and using the observation around
Lemma \ref{Pdecomp} we conclude that $P[x,y]=P_0[x,y]\tilde{P}[x,y]$
is the decomposition for $\tilde{P}[x,y] = (xy+y+1)x(x^2+xy+x+y-1)$.
Following the first line in \nn{ideals}, one easily computes 
\begin{eqnarray*} 
\begin{split}
  &1 = -yP_0[x,y] + P_1[x,y], \quad 1 = -(x-1)P_0[x,y] + xP_2[x,y], \\
  &1 = (x+y)P_0[x,y] - P_3[x,y]
\end{split}
\end{eqnarray*}
and multiplying these three relations we obtain 
$$ 1= \tilde{Q}[x,y] P_0[x,y] + 
      Q_0[x,y]\underbrace{P_1[x,y]P_2[x,y]P_3[x,y]}_{\tilde{P}[x,y]} $$
together with explicit form of the projectors $\tilde{Q}$ and $Q_0$.
Passing to the corresponding operators on the space $\cV$, this is
the decomposition \nn{al-iddec} of 
$P[\cD_1,\cD_2] = P_0[\cD_1,\cD_2]\,\tilde{P}[\cD_1,\cD_2]$.
Now using Theorem \ref{winhg} (and Remark \ref{dechg}) we see that 
every solution $u \in \cV$ of $P[\cD_1,\cD_2]u=f$ can be uniquely 
expressed as
$$ u = \tilde{Q}[\cD_1,\cD_2] u_0 + Q_0[\cD_1,\cD_2]\tilde{u} $$
where $u_0$ and $\tilde{u}$ satisfy
$P_0[\cD_1,\cD_2]u_0=f$ and $\tilde{P}[\cD_1,\cD_2]\tilde{u}=f$.
So we have reduced the original problem $P[\cD_1,\cD_2]u=f$ to the 
system of latter two equations. 

Using the last line in \nn{ideals}, we can apply Theorem \ref{winhg} 
to the equation $\tilde{P}[\cD_1,\cD_2]\tilde{u}=f$. It is easy 
to compute the corresponding dual $\be$--decomposition for
the operator $\tilde{P}[\cD_1,\cD_2] = 
P_1[\cD_1,\cD_2]P_2[\cD_1,\cD_2]P_3[\cD_1,\cD_2]$, 
$\be = \{\{1,2,3\}\}$; on the polynomial level we obtain 
$$ 1= \frac{1}{2}P_1[x,y] + \frac{1}{2}(x+1)P_2[x,y] 
      - \frac{1}{2}P_3[x,y]. $$
This is actually also the $\al$--decomposition, 
$\al = \{\{1,2\},\{2,3\},\{1,3\}\}$ as 
$P_1[\cD_1,\cD_2] = P^{\{2,3\}}[\cD_1,\cD_2]$ etc.\ according to 
the notation in \nn{al-iddec}. Now applying Theorem \ref{winhg} we 
obtain that every solution $\tilde{u} \in \cV$ of 
$\tilde{P}[\cD_1,\cD_2]\tilde{u}=f$ has the form 
$$ \tilde{u} = \frac{1}{2}\tilde{u}_1 + \frac{1}{2}(\cD_1+1)\tilde{u}_2
   - \frac{1}{2}\tilde{u}_3 $$
where $\tilde{u}_1$, $\tilde{u}_2$ and $\tilde{u}_3$ satisfy
the equations 
$P_2[\cD_1,\cD_2]\,P_3[\cD_1,\cD_2]\,\tilde{u}_1 =f$, 
$P_1[\cD_1,\cD_2]\,P_3[\cD_1,\cD_2]\,\tilde{u}_2 =f$ and
$P_1[\cD_1,\cD_2]\,P_2[\cD_1,\cD_2]\,\tilde{u}_3 =f$. Note that
such expression for $\tilde{u}$ is not generally unique.

Summarising, we have reduced the original equation
$P[\cD_1,\cD_2]u=f$, see \nn{Pexample}, for $\cD_1,\cD_2 \in \End(\cV)$
to the system of four equations 
\begin{eqnarray} \label{Presult}
\begin{split}
  &&P_i[\cD_1,\cD_2]&P_j[\cD_1,\cD_2]\tilde{u}_k =f 
    \ \mbox{where}\ \{i,j,k\} = \{1,2,3\},\ i<j, \\
  &&P_0[\cD_1,\cD_2]&u_0=f.
\end{split}
\end{eqnarray}
If we put $\cD_1 := \frac{\partial}{\partial x}$ and $\cD_2 :=
\frac{\partial}{\partial y}$, the original problem has order $6$ and
the resulting system \nn{Presult} has order $3$.

\section{Systems of polynomial equations}
\label{systems}

The notion algebraic decompositions from \cite{GoSiDec} summarised in 
Section \ref{setup} can be applied also to systems of equations of the 
form \nn{compP} with commuting $P_i$. Here we describe one possible 
type of such a system to demonstrate power of this machinery.
We will consider only the (true) decompositions.

Let us consider a $k$--tuple of commuting linear operators 
$P^{(i)} \in \End{\cV}$ and corresponding equations 
\begin{equation} \label{system}
  P^{(i)} u = f^i, \quad [P^{(i)},P^{(j)}]=0,\ f^i \in \cV,\ 
  1 \leq i,j \leq k.
\end{equation}
The necessary (i.e.\ integrability) condition for existence of 
a solution $u$ is obviously
\begin{equation} \label{integr}
  P^{(i)} f^j = P^{(j)} f^i, \quad 1 \leq i<j \leq k.
\end{equation}
If, for some $i$, $P^{(i)}$ is of the form \nn{compP} satisfying
\nn{al-iddec}, 
we can replace $P^{(i)} u = f^{(i)}$ with several 
simpler equations using Theorem \ref{winhg}. But even if this is not the case,
one can obtain a decomposition using an algebraic relation between the
$P^{(i)}$s. 

\vspace{1ex}

Let us consider the special case where just one equation from 
\nn{system} is of the form
\nn{compP} and we do not decompose the remaining ones, i.e.\ we have the system
\begin{eqnarray} \label{1compP}
\begin{split}
  &Pu = P_0 P_1 \ldots P_\ell u = f,\ 
   [P_i,P_j]=0, 1 \leq i,j \leq \ell, \\
  &R^{(j)} u = g^j,\  
   [P_i,R^{(j)}]=0, 1 \leq j \leq k, 1 \leq i \leq \ell
\end{split}
\end{eqnarray}
where $P_i$ and $R_j$ satisfy the identity 
\begin{eqnarray} \label{1iddec}
\begin{split}
  &id_V = Q_0 P^0 + \ldots + Q_\ell P^\ell + 
         S_1 R^{(1)} + \ldots + S_k R^{(k)}, \\
  &[Q_i,P_j] = [S_p,R_q] =0,\
   1 \leq i,j \leq \ell,\ 1 \leq p,q \leq k, \\
  &[Q_i,R^{(p)}] = [S_p,P_i] =0,\
   1 \leq i \leq \ell,\ 1 \leq p \leq k
\end{split}
\end{eqnarray}
where $P^i:=\Pi_{i\neq j=0}^{j=\ell} P_i, i=0,\cdots ,\ell$.
(Note in \nn{1compP} we require not only commutativity of the left 
hand sides of the equations in the systems as in \nn{system} 
but also commutativity of the factors $P_i$ and the left hand sides
$R^{(j)}$.)
The condition \nn{integr} then becomes
\begin{equation} \label{1integr}
  R^{(j)} f = P g^j, \ R^{(j)} g^i = R^{(i)} g^j \quad 
  \mbox{for all} \ 1 \leq i,j \leq k.
\end{equation}

\begin{proposition}[Dual decomposition] \label{1Peucgen}
Let us consider the system \nn{1compP}. Then 
\nn{1iddec} is equivalent to
\begin{equation} \label{1paireucgen}
  \id_{\cV} = Q_{i,j} P_i + Q_{j,i} P_j + 
              Q_{i,j}^1 R^{(1)} + \ldots + Q_{i,j}^k R^{(k)}
\end{equation}
where $Q_{i.j}, Q_{i.j}^p \in \End (\cV)$
satisfy $[Q_{i,j},P_s] = [Q_{i,j}R^{(p)}] = [Q_{i,j}^p,R^{(q)}] = 
[Q_{i,j}^p,P_s] =0$
where $0 \leq i,j,s \leq \ell$, $i \not= j$ and $1 \leq p,q \leq k$.
\end{proposition}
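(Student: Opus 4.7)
The plan is to work entirely inside the commutative subalgebra $A \subseteq \End(\cV)$ generated by the $P_i$, the $R^{(p)}$, and all coefficient operators ($Q_m$, $S_p$ in \nn{1iddec} or $Q_{i,j}$, $Q_{i,j}^p$ in \nn{1paireucgen}). By the hypotheses of either identity every one of these commutes with every $P_j$ and every $R^{(q)}$, so $A$ is genuinely commutative. Once everything is placed in this common algebraic framework, the required commutation relations become automatic and the content of the proposition reduces to a statement about the ideal structure in a commutative ring.

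For the direction \nn{1iddec} $\Rightarrow$ \nn{1paireucgen}, fix a pair $i \neq j$ in $L = \{0,\ldots,\ell\}$. The key observation is that $P^i = P_j \cdot P^{\{i,j\}}$, $P^j = P_i \cdot P^{\{i,j\}}$, and for $m \notin \{i,j\}$ one has $P^m = P_i P_j \cdot P^{\{i,j,m\}}$, so each summand $Q_m P^m$ can be re-arranged (using commutativity) to exhibit either $P_i$ or $P_j$ as a right factor. Distributing the summands for $m \notin \{i,j\}$ into the ``$P_i$--group'' gives
\begin{equation*}
  id_\cV = \Bigl( Q_j P^{\{i,j\}} + \sum_{m \neq i,j} Q_m P_j P^{\{i,j,m\}} \Bigr) P_i + Q_i P^{\{i,j\}} P_j + \sum_{p=1}^k S_p R^{(p)},
\end{equation*}
which has the shape of \nn{1paireucgen} once one sets $Q_{i,j}^p := S_p$; the commutation relations required in \nn{1paireucgen} are inherited from those in \nn{1iddec}.

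For the converse \nn{1paireucgen} $\Rightarrow$ \nn{1iddec}, I consider the ideal $\mathcal{I} := (R^{(1)},\ldots,R^{(k)}) \subseteq A$ and the commutative quotient ring $\bar A := A/\mathcal{I}$. Reducing \nn{1paireucgen} modulo $\mathcal{I}$ yields $\overline{id_\cV} = \overline{Q_{i,j}}\,\overline{P_i} + \overline{Q_{j,i}}\,\overline{P_j}$ for every $i \neq j$, which is precisely the pairwise relative invertibility condition \nn{paireucgen} for the images of $P_0,\ldots,P_\ell$ in $\bar A$. By the duality of Proposition~\ref{Peucgen}, there exist $\overline{Q_0},\ldots,\overline{Q_\ell} \in \bar A$ with $\overline{id_\cV} = \sum_m \overline{Q_m}\,\overline{P^m}$. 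Lifting each $\overline{Q_m}$ to $Q_m \in A$, the difference $id_\cV - \sum_m Q_m P^m$ then lies in $\mathcal{I}$ and so can be written as $\sum_p S_p R^{(p)}$ for some $S_p \in A$, producing \nn{1iddec}; the required commutations of $Q_m$ and $S_p$ with the $P_i$ and $R^{(p)}$ are automatic because every lifted element lies in the commutative subalgebra $A$.

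The main obstacle I foresee is justifying that the combinatorial content of Proposition~\ref{Peucgen}, originally phrased for endomorphisms of a vector space, transfers verbatim to the commutative quotient $\bar A$; however, the argument in \cite{GoSiDec} is purely ring-theoretic and depends only on commutativity of the factors, so the transfer is harmless. Alternatively I could bypass the quotient entirely by multiplying the pairwise identities \nn{1paireucgen} for all pairs $\{i,m\}$ with $i \neq m$ (for each fixed $m$) and expanding: the unique term in which every factor chosen is of the form $Q_{i,m} P_i$ yields $\bigl(\prod_{i \neq m} Q_{i,m}\bigr) P^m$, while every other term carries either a $P_m$ or some $R^{(p)}$, giving an explicit but notationally heavier construction of the $Q_m$ and $S_p$.
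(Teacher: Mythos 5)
Your forward direction, \nn{1iddec} $\Rightarrow$ \nn{1paireucgen}, is correct and is in fact exactly the ``straightforward modification of the proof of Proposition \ref{Peucgen}'' that the paper itself invokes (the paper gives no more detail than that one line): for a fixed pair $i\neq j$ you write $P^i=Q$-commuted multiples of $P_j$, $P^j$ of $P_i$, $P^m$ ($m\notin\{i,j\}$) of $P_i$, and carry the terms $S_pR^{(p)}$ along unchanged, with all required commutations inherited. Where you genuinely diverge from the paper is the converse: instead of modifying the constructive argument of \cite{GoSiDec} so that the $R^{(p)}$-terms are carried through it, you pass to the quotient by the ideal generated by the $R^{(p)}$ and invoke Proposition \ref{Peucgen} as a black box, then lift. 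That is an attractive, more structural route, but as written it has a gap.

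The gap is the opening claim that the algebra $A$ generated by the $P_i$, the $R^{(p)}$ and all coefficient operators is commutative ``because every one of these commutes with every $P_j$ and every $R^{(q)}$''. Neither \nn{1iddec} nor \nn{1paireucgen} asserts that the coefficient operators commute \emph{with one another} (no hypothesis controls $[Q_{i,j},Q_{r,s}]$, $[Q_{i,j},Q^p_{r,s}]$, $[S_p,Q_m]$, etc.), so $A$, and hence $\bar A$, need not be commutative, and the transfer of Proposition \ref{Peucgen} ``to the commutative quotient $\bar A$'' is not available verbatim. A second, related wrinkle: Proposition \ref{Peucgen} is a statement about operators on a vector space and produces coefficients in $\End(\cV)$; applied inside $\bar A$ these are a priori endomorphisms of $\bar A$, not elements of it, so ``lifting $\overline{Q_m}$ to $A$'' needs justification. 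Both points are repairable, because the constructive proof needs only that the $\bar P_s$ (and the $R^{(p)}$, which makes the ideal two-sided) are central among the elements in play: for fixed $m$ multiply the identities $\bar 1=\bar Q_{i,m}\bar P_i+\bar Q_{m,i}\bar P_m$ over $i\neq m$ and push the $\bar P$-factors to the right to get $\bar 1=C_m\bar P^m+D_m\bar P_m$, then take one further product over $m$ (the all-$D_m$ term is divisible by $\bar P=\bar P_m\bar P^m$) to reach $\bar 1=\sum_m\bar Q_m\bar P^m$, and lift. Note that this extra product over $m$ is also missing from your alternative sketch at the end: for fixed $m$ the product of the pairwise identities \nn{1paireucgen} only yields $1=\bigl(\prod_{i\neq m}Q_{i,m}\bigr)P^m+(\cdots)P_m+\sum_p(\cdots)R^{(p)}$, which is not yet of the form \nn{1iddec}. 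The final commutation claims are fine as you state them: every element of $A$ does commute with each $P_s$ and each $R^{(q)}$, since every generator does, and that is all \nn{1iddec} requires.
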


\begin{proof}
This is just a straightforward modification of the proof of Proposition
\ref{Peucgen}.
\end{proof}

In this setting, we obtain an analogue of Theorem \ref{winhg} for the 
special case of decompositions. In this
Theorem, we replaced the operator $P$ given by \nn{compP} satisfying 
\nn{al-iddec} with
the system \nn{wmultiprobg} of $\ell+1$ simpler equations. Here we replace
the system \nn{1compP} with a ``system of simpler systems'' as follows.

\begin{theorem} \label{1inhg}
Let $\cV$ be a vector space over a field $\bF$
and consider $P, R^{(j)}: \cV \to \cV$ as in \nn{1compP} with
the factorisation giving the decomposition \nn{1iddec}. 
Here and below we assume the range $1 \leq j \leq k$.
Let us fix $f, g^j \in \cV$.  There is a 1-1 relationship
between solutions $u \in \cV$ of \nn{1compP} and solutions 
$(u_0,\cdots,u_\ell)\in\oplus^{\ell+1}\cV$ 
of the problem
\begin{eqnarray} \label{1multiprobg}
\begin{split}
  P_0 u_0=f, R^{(1)} u_0 &= P^0 g^1, \cdots, R^{(k)} u_0 = P^0 g^k \\   
  & \vdots \\
  P_\ell u_\ell=f, R^{(1)} u_\ell &= P^\ell g^1, \cdots, 
    R^{(k)} u_\ell = P^\ell g^k.
\end{split}
\end{eqnarray}

Writing $\cV_P^{f,\bg}$ for the solution space of \nn{1compP} and (for
$i=0,\cdots ,\ell$) $\cV^{f,\bg}_{i}$ for the solution space of
the system corresponding to the $i$th line in \nn{1multiprobg}, where
$\bg = (g^1,\ldots,g^k)^T$.
The map 
$F:\cV^{f,\bg}_P\to \mbox{\Large\bf $\times$}_{i=0}^\ell\cV^{f,\bg}_{i} $ 
is given by
$$ u\mapsto (P^0 u, \cdots ,P^\ell u ) ~,
$$
with inverse 
$B: \mbox{\Large\bf $\times$}_{i=0}^\ell\cV^{f,\bg}_{i}\to \cV_P^{f,\bg}$ 
given by
$$
(u_0,\cdots, u_\ell) \mapsto 
\sum_{i=0}^{i=\ell} Q_i u_i + \sum_{j=1}^{j=k} S_j g^j~.
$$
On $\cV$ we have $B\circ F=id_{\cV_P^{f,\bg}}$, while on the affine space
$\mbox{\Large\bf $\times$}_{i=0}^\ell\cV^{f,\bg}_{i}$ we have
$F\circ B =\id_{\mbox{\large\bf $\times$}_{i=0}^\ell\cV^{f,\bg}_{i}}$.
\end{theorem}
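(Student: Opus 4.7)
The plan is to verify by direct computation that the two explicit maps $F$ and $B$ are mutually inverse bijections between $\cV_P^{f,\bg}$ and the product $\mbox{\Large\bf $\times$}_{i=0}^\ell\cV^{f,\bg}_i$. The argument will run closely parallel to the proof of Theorem \ref{winhg}, with the auxiliary data $(R^{(j)},S_j,g^j)$ handled through the commutativity relations in \nn{1iddec}. I would check in turn: (i) $F$ lands in the product; (ii) $B$ lands in $\cV_P^{f,\bg}$; (iii) $B\circ F=\id$; and (iv) $F\circ B=\id$. Step (iii) will be essentially a restatement of \nn{1iddec}, since for $u\in\cV_P^{f,\bg}$ the identity gives
\begin{equation*}
u \;=\; \Bigl(\sum_i Q_i P^i + \sum_j S_j R^{(j)}\Bigr)u
\;=\; \sum_i Q_i\,(F(u))_i + \sum_j S_j\,g^j \;=\; B(F(u)),
\end{equation*}
using $R^{(j)}u=g^j$.

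For step (i), I would simply note that $u\in\cV_P^{f,\bg}$ implies $P_i(P^i u)=Pu=f$, while commutativity of $P^i$ with $R^{(j)}$ gives $R^{(j)}(P^i u)=P^i g^j$. For step (ii), I would set $u:=\sum_i Q_i u_i + \sum_j S_j g^j$ and first extract the integrability consequence $R^{(j)}f=Pg^j$ by applying $P_i$ to the defining relation $R^{(j)}u_i=P^i g^j$. Then $P_i u_i=f$ and \nn{1iddec} yield
\begin{equation*}
Pu \;=\; \sum_i Q_i P^i f + \sum_j S_j P g^j
\;=\; \Bigl(\sum_i Q_i P^i + \sum_j S_j R^{(j)}\Bigr)f \;=\; f,
\end{equation*}
and the same manipulation, invoking also the second half of \nn{1integr}, gives $R^{(j)}u=g^j$.

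The hard part will be step (iv). Here my plan is to apply \nn{1iddec} to each individual $u_i$:
\begin{equation*}
u_i \;=\; \sum_m Q_m P^m u_i + \sum_j S_j R^{(j)} u_i
\;=\; \sum_m Q_m P^m u_i + \sum_j S_j P^i g^j.
\end{equation*}
The key identity I expect to need is the symmetry $P^m u_i = P^i u_m$ for $m\neq i$: writing $T_{im}:=\prod_{s\neq i,m} P_s$, both sides equal $T_{im}f$ because $P_i u_i = P_m u_m = f$. Substituting this (together with the trivial case $m=i$) into the display above should rearrange the right hand side into $P^i\bigl(\sum_m Q_m u_m + \sum_j S_j g^j\bigr) = P^i u = (F(u))_i$, giving $F\circ B=\id$ and completing the bijection. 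The only subtlety I foresee is the careful bookkeeping of which term matches which under the symmetry, but once that is in hand the rest is just commuting operators past each other as allowed by \nn{1iddec}.
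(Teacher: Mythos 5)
Your proposal is correct and follows essentially the same route as the paper's proof: verifying that $F$ and $B$ land in the right solution spaces via the commutation relations, obtaining $B\circ F=\id$ directly from \nn{1iddec} with $R^{(j)}u=g^j$, and establishing $F\circ B=\id$ through the consistency identity $P^m u_i=P^i u_m$ together with $R^{(j)}u_r=P^r g^j$ and \nn{1iddec}. The only cosmetic difference is that you derive $Pg^j=R^{(j)}f$ from the data of \nn{1multiprobg} whereas the paper quotes it from the integrability condition \nn{1integr} (whose second half you, like the paper, invoke as given).
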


\begin{proof} Suppose $u$ is a solution of \nn{1compP}. 
Then $P_iP^i u=Pu=f$ and also 
$R^{(j)} P^i u = P^i(R^{(j)}u) = P^i g^j$. Hence $Fu$ 
is a solution of \nn{1multiprobg}. 
For the converse suppose that $(u_0,\cdots,u_\ell)$ is a solution of 
\nn{1multiprobg} and write 
$u:=  \sum_{i=0}^{i=\ell} Q_i u_i + \sum_{j=1}^{j=k} S_j g^j$.
Then
$$
\begin{aligned}
  Pu= \sum_{i=0}^{i=\ell}PQ_i u_i + \sum_{j=1}^{j=k} PS_j g^j =
  \sum_{i=0}^{i=\ell} Q_i P^i (P_i u_i) + \sum_{j=1}^{j=k} S_j R^{(j)}f =f
\end{aligned}        
$$
where we have used $Pg^j = R^{(j)}f$ from \nn{1integr} and then \nn{1iddec}. 
Further
$$
\begin{aligned}
  R^{(j)}u 
  &= \Bigl( \sum_{i=0}^{i=\ell} R^{(j)} Q_i u_i \Bigr) 
     + R^{(j)} S_j g^j 
     + \Bigl( \sum_{j\not= p=1}^{p=k} R^{(j)} S_p g^p \Bigr) \\
  &= \Bigl( \sum_{i=0}^{i=\ell} Q_i P^i g^j  \Bigr)
     + R^{(j)} S_j g^j
     + \Bigl( \sum_{j\not= p=1}^{p=k} S_p  R^{(p)} g^j \Bigr) = g^j
\end{aligned}
$$
where we have used $R^{(j)} u_i = P^ig^j$ from \nn{1multiprobg}
and $R^{(j)} g^p = R^{(p)} g^j$ from \nn{1integr} in the middle equality
and \nn{1iddec} in the last one.

It remains to show that $F$ and $B$ are inverses. Clearly 
$u \in \cV_P^{f,\bg}$ 
satisfies 
$$ (B \circ F) u  = 
   \sum_{i=0}^{i=\ell} Q_iP^i u + \sum_{j=1}^{j=k}S_j g^j =
   \sum_{i=0}^{i=\ell} Q_iP^i u + \sum_{j=1}^{j=k}S_j R^{(j)} u = u$$
since $g^j = R^{(j)} u$ according to \nn{1compP}. Thus
we obtain $B\circ F=id_{\cV_P^{f,\bg}}$. To compute the opposite direction we 
need the $r^{\rm th}$ component of $FB(u_0,\ldots,u_\ell)$ 
for $(u_0,\ldots,u_\ell) \in \times_{i=0}^\ell\cV^{f,\bg}_{i}$.
This is equal to 
$$ P^r \sum_{i=0}^{i=\ell} Q_i u_i + P^r \sum_{j=1}^{j=k} S_j g^j = 
   \sum_{r \not= i=0}^{i=\ell} Q_i P^r u_i + Q_r P^r u_r
   + \sum_{j=1}^{j=k} S_j R^{(j)} u_r = u_r. $$
since $P^r u_i = P^i u_r$ for $r \not= i$ (the consistency condition given
by $P_iu_i =f$ for $i =0,\ldots,\ell$) and $P^r g^j = R^{(j)} u_r$ from
\nn{1multiprobg}. Hence $F \circ B = id_{\times_{i=0}^\ell\cV^{f,\bg}_{i}}$.
\end{proof}

\vspace{1ex}

\section{Higher symmetries of operators}\label{symm}

For a vector space $\cV$ and a linear operator $P:\cV\to \cV$, let is
say that a linear operator $S:\cV\to \cV$ is a {\em formal symmetry}
  of $P$ if $PS=S'P$, for some other linear operator
$S':\cV\to \cV$. Note that $S:\cN(P)\to \cN(P)$. In \cite{GoSiDec} we
called operators with the latter property ``weak symmetries'' 
and discussed the
structure of the algebra of these in relation to symmetries and
related maps for the component operators $P_i$.  We show here that
although formal symmetries are defined rather differently similar
results hold using our general tools as discussed above and in
\cite{GoSiDec}.  For the case of $P$ a differential operator the
formal symmetries agree with the ``higher symmetries'' considered in
\cite{MikEsym} and we thank Mike Eastwood for asking whether the ideas
from \cite{GoSiDec} might be adapted to deal directly with what we are
here calling formal symmetries.

Consider the case of an operator $P=P_0P_1\cdots P_\ell$
with a decomposition
\begin{equation}\label{decs}
id_\cV=Q_0P^0+\cdots +Q_\ell P^\ell,
\end{equation}
i.e.\ \nn{al-iddec} with $\al := \{ J \subseteq L \mid |J|=1\}$. 
Then as commented in Remark \ref{dechg}, upon restriction to
$\cN(P)$, the operators
$$
Pr_i:=Q_iP^i, \quad i=0,1,\cdots ,\ell
$$ are projections onto $\cN(P_i)$. This was the critical object used to 
discuss  weak symmetries and their decompositions in \cite{GoSiDec}.
Here we see that it plays a similar  for formal
symmetries.

Now note that if $S$ is a formal symmetry of $P$ then $Pr_i S Pr_i$ is
a formal symmetry of $P_i$. More generally, using the assumed
commutativity as in \nn{al-iddec}, we have
$$
P_i (Pr_i S Pr_j)= Q_i(P S) Pr_j=Pr_i (S'P)Pr_j=(Q_i S'Pr_jP^j)P_j  
$$ so $S_{ij}:=Pr_i S Pr_j $ linearly maps $\cN(P_j)\to \cN(P_i)$. 
(In fact $P^i S$ would suffice (see the remark below), 
we use $Pr_i S Pr_j$ for the link
with \cite{GoSiDec}.) 
But we may view the property $ P_iS_{ij}=
S'_{ij}P_j$ (with $S'_{ij}$ any linear endomorphism of $\cV$) as a
generalisation of the idea of a formal symmetry.  If we have such a
{\em generalised formal symmetry} $S_{ij}$ for all pairs $i,j\in
\{0,1,\cdots ,\ell\}$ then note that for each pair $i,j$ we have
$$
P  S_{ij}Pr_j=P^i P_iS_{ij}Pr_j=P^iS'_{ij}P_jPr_j=(P^iS'_{ij}Q_j)P; 
$$ $S_{ij}Pr_j$ (and hence also $Pr_iS_{ij}Pr_j$) is a formal symmetry
of $P$.  Thus the decomposition of the identity \nn{decs} allows us to
understand formal symmetries of $P$ in terms of the generalised formal
symmetries of the component operators $P_i$, $i=0,1,\cdots, \ell$.

\begin{remark}
  This result for formal symmetries follows the Theorem 4.1 in
  \cite{GoSiDec} where weak symmetries are treated. The decomposition
  of the identity \nn{decs} plays the crucial role in this theorem.
  Since,upon restriction to $\cN(P)$, the $Pr_i$ are projections, the
  formulae above have a straightforward conceptual interpretation.
  However there is, in fact, an even simpler relationship between
  formal symmetries $S$ of $P$ and generalised formal symmetries
  $S_{ij}$, $i,j \in \{0,\ldots,\ell\}$.  We simply put $S_{ij} :=
  P^iS|_{\cN(P_j)}$ for a formal symmetry $S$ and $S := S_{ij}P^j$ for
  a generalised formal symmetry $S_{ij}$.
\end{remark}

Using a factorisation from \cite{GoEinst}, this observation enables a
treatment of the higher symmetries of the e.g.\ the conformal Laplacian
operators of \cite{GJMS} on conformally Einstein manifolds. In
particular an alternative approach to the higher symmetries of the
Paneitz operator which is alternative to that in \cite{EL}. (In fact
in \cite{EL} they consider only the square of the Laplacian on
Euclidean space but by conformal invariance this may alternatively
treated via the Paneitz operator on the sphere.)  This will be taken
up elsewhere.


\begin{thebibliography}{XX}

\bibitem{BKM} C.P.\ Boyer, E.G.\ Kalnins, W.\ Miller Jr., {\em Symmetry
  and separation of variables for the Helmholtz and Laplace equations},
  Nagoya Math.\ J.\ {\bf 60} (1976), 35--80.


\bibitem{CLS} D. Cox, J. Little, D. O'Shea, ``Ideals, varieties,
and algorithms. An introduction to computational algebraic geometry
and commutative algebra.'' Second edition. Undergraduate Texts in
Mathematics. Springer-Verlag, New York, 1997. xiv+536 pp.

\bibitem{MikEsym} Michael Eastwood, {\em Higher symmetries of the
Laplacian}, Ann.\ of Math.\ {\bf 161} (2005), 1645--1665.

\bibitem{EL} Michael Eastwood, and Thomas Leistner, {\em Higher
Symmetries of the Square of the Laplacian}, preprint math.DG/0610610.

\bibitem{GoEinst} A.R.\ Gover, {\em Laplacian operators and
  Q-curvature on conformally Einstein manifolds}, Mathematische
  Annalen, {\bf 336} (2006), 311--334.

\bibitem{GoSiDec} A.R.\ Gover and J.\ \v{S}ilhan, \textit{Commuting 
linear operators and decompositions; applications to Einstein manifolds},
Preprint math/0701377 , {\tt www.arxiv.org}.

\bibitem{GJMS} C.R.\ Graham, R.\ Jenne, L.J.\ Mason, G.A.\ Sparling,
  \textit{Conformally invariant powers of the Laplacian, I:
  Existence}. J. London Math. Soc.\ \textbf{46}, (1992) 557--565.

\bibitem{Mi} W.\ Miller, Jr., Symmetry and separation of variables,
  Encyclopedia of Mathematics and its Applications, Vol. 4.
  Addison-Wesley Publishing Co., Reading, Mass.-London-Amsterdam,
  1977. xxx+285 pp.



\end{thebibliography}
\end{document}